\documentclass[a4paper,11pt]{article}
\usepackage{titlesec}
\usepackage{amsthm}
\usepackage{amssymb}
\usepackage{color}
\usepackage{mathtools}
\usepackage{geometry}
%\geometry{verbose,letterpaper,tmargin=20mm,bmargin=20mm,lmargin=18mm,rmargin=18mm}

%\usepackage{setspace}
%\doublespacing
%\usepackage{times}
\usepackage{algorithm2e}
\usepackage[T1]{fontenc}

% for comments:
\usepackage{verbatim}
\numberwithin{equation}{section}

\newtheorem{theorem}{Theorem}[section]
\newtheorem{corollary}[theorem]{Corollary}

\theoremstyle{definition}

\newtheorem{remark}[theorem]{Remark}

\newcommand{\bfx}{\boldsymbol{x}}
\newcommand{\bfy}{\boldsymbol{y}}
\newcommand{\bfz}{\boldsymbol{0}}

%%%%%%%%%%%%%%%%%%%%%%%%%%%%%%%%%%%%%%%%%%%%%%%%%%%%%%%
\usepackage{array}
\newcolumntype{C}[1]{>{\centering\arraybackslash}m{#1}}
%
%% definitions used by included articles, reproduced here for
%% educational benefit, and to minimize alterations needed to be made
%% in developing this sample file.
\usepackage{graphics}
\usepackage{amssymb, latexsym, amsmath, amsfonts, epsfig}
\usepackage{bm}
\usepackage{graphicx}
\usepackage{color}
\usepackage{epstopdf}
\usepackage{comment}
\usepackage{soul}
\usepackage[normalem]{ulem}
\setlength{\parindent}{2em}
\usepackage{float}
\usepackage{MnSymbol}

\usepackage{caption}
\captionsetup[table]{format=plain,labelformat=simple,labelsep=colon}
%labelsep=period
\captionsetup[figure]{labelfont={default},name={Figure},labelsep=colon}

%\newcounter{theorem}
%

\date{\vspace{-6ex}}

\begin{document}

\newcommand{\Question}[1]{{\marginpar{\color{blue}\footnotesize #1}}}
\newcommand{\blue}[1]{{\color{blue}#1}}
\newcommand{\red}[1]{{\color{red} #1}}

\newif \ifNUM \NUMtrue

\title{Exact Eigenvalues and Eigenvectors for Some n-Dimensional Matrices}
\author{Quanling Deng\thanks{School of Computing, Australian National University, Canberra, ACT 2601, Australia. E-mail addresses: quanling.deng@anu.edu.au; qdeng12@gmail.com.}
%\and
%Alexandre Ern\thanks{University Paris-Est, CERMICS (ENPC), 77455 Marne la Vall\'ee cedex 2, and INRIA Paris, 75589 Paris, France. E-mail address: alexandre.ern@enpc.fr}
}

\maketitle

\begin{abstract}
Building on previous work that provided analytical solutions to generalised matrix eigenvalue problems arising from numerical discretisations, this paper develops exact eigenvalues and eigenvectors for a broader class of $n$-dimensional matrices, focusing on non-symmetric and non-persymmetric matrices. These matrices arise in one-dimensional Laplacian eigenvalue problems with mixed boundary conditions and in a few quantum mechanics applications where standard Toeplitz-plus-Hankel matrix forms do not suffice. By extending analytical methodologies to these broader matrix categories, the study not only widens the scope of applicable matrices but also enhances computational methodologies, leading to potentially more accurate and efficient solutions in physics and engineering simulations. 

%Furthermore, we explore an inverse-type problem by assuming the forms of eigenvectors and constructing the matrices that exhibit these eigenvectors.

%\textbf{Mathematics Subjects Classification}: 65N15, 65N30, 65N35, 35J05
\end{abstract}
%

%\begin{keywords} 
\paragraph*{Keywords}
eigenvalue, eigenvector, Toeplitz, Hankel, non-symmetric
%\end{keywords}

\section{Introduction} \label{sec:intr}

The matrix eigenvalue problem (MEVP) is to find the eigenpairs $(\lambda \in \mathbb{C}, \bfx \in \mathbb{C}^n)$ such that
\begin{equation} \label{eq:evp}
A \bfx = \lambda \bfx
\end{equation} 
while the generalised MEVP (GMEVP) is to find the eigenpairs $(\lambda \in \mathbb{C}, \bfx \in \mathbb{C}^n)$ such that
\begin{equation} \label{eq:gevp}
A \bfx = \lambda B \bfx,
\end{equation}
where $A, B \in \mathbb{C}^{n\times n}$. 
While various numerical methods are available in the literature, this paper focuses on finding the analytical (interchangeably, exact) eigenvalues and eigenvectors for certain structured MEVP and GMEVP with arbitrarily large $n$.

We provide a very brief background for the problem. MEVPs and GMEVPs are pivotal in various scientific and engineering disciplines \cite{strang1988linear, meyer2000matrix}. Recent developments have achieved analytical solutions for eigenpairs of certain structured matrices--such as Toeplitz-plus-Hankel and block-diagonal matrices—that typically arise from the discretisation of differential equations \cite{hughes2008duality, hughes2012finite, hughes2014finite,calo2019dispersion, deng2018dmm, deng2020boundary}. While these solutions offer robust frameworks for symmetric or persymmetric matrices, they fall short of addressing the complexities presented by non-persymmetric and non-symmetric matrices frequently encountered in more general settings.

This paper builds upon the classic and more recent results obtained for special matrices and their generalisations to structured forms (typically symmetric and persymmetric) \cite{solary2013finding, barrera2017asymptotics, da2019eigenpairs, da2007eigenvalues,willms2008analytic,trench1999eigenvalues,kouachi2006eigenvalues,yueh2005eigenvalues,fasino1988spectral, losonczi1992eigenvalues, chang2009exact}. We refer to \cite{deng2021analytical} for a relatively more detailed literature study of matrices where exact eigenpairs are available. 
The work \cite{deng2021analytical} provides exact eigenpairs for five sets of symmetric and persymmetric matrices as well as their generalisations. 
In this paper, we extend the results and provide the exact eigenvalue and eigenvectors for several new sets of $n$-dimensional non-symmetric and non-persymmetric matrices. To the author's best knowledge, exact eigenpairs were not available in the literature for these new sets of matrices. 
 The generalised approach retains the rigour of analytical methods while broadening their applicability to matrices encountered in advanced applications, including those that arise from the discretisations of PDEs with non-standard boundary conditions and other fields of science (e.g, the matrices in \cite{sony2024strictly} arising from Quantum Physics).
The results presented herein not only demonstrate the analytical derivability of eigenpairs for more complex matrix structures but also propose novel methodologies for their computation.

The rest of the article is organized as follows.
Section~\ref{sec:snp} presents the main results for matrices that are both symmetric and non-persymmetric, 
while Section~\ref{sec:nsnp} presents the main results for matrices that are both non-symmetric and non-persymmetric.
%In Section \ref{sec:mg}, we consider an inverse-type problem: assume the forms of eigenvectors and construct the matrices.
Concluding remarks are given in Section~\ref{sec:conclusion}.

\section{Symmetric and non-persymmetric matrices} \label{sec:snp}

In this section, we establish exact eigenvalues and eigenvectors for a few sets of $n$-dimensional matrices that are both symmetric and non-persymmetric.
For simplicity, we adopt the notation in \cite{deng2021analytical}.
We denote matrices and vectors by uppercase and lowercase bold letters, respectively. 
In particular, let $A \in \mathbb{C}^{n\times n}$ be a $n\times n$ square matrix with entries denoted as $A_{jk},j,k=1,\cdots,n,$ and $\bfx = (x_1, x_2, \cdots, x_n)^T\in \mathbb{C}^{n}$ be a vector with entries denoted as $x_j,j=1,\cdots,n,$ in the complex field. 
Furthermore, 
we denote by $A^{(\xi,m)}$ a structured matrix that the entries depend on a sequence of parameters $\xi = (\xi_0, \xi_1, \cdots, \xi_m).$ 
Herein, $\xi$ is a generic parameter that specifies the matrix; for example, $A^{(\beta,m)}$ is a structured matrix depending on the parameters $\beta = (\beta_0, \beta_1, \cdots, \beta_m).$ 
The superscript $\cdot^{(\xi,m)}$ is omitted when the context is clear. 
We denote by $T^{(\xi,m)} = (T^{(\xi,m)}_{j,k}) \in \mathbb{C}^{n\times n}$ a symmetric, persymmetric, and diagonal-structured Toeplitz matrix with entries
\begin{equation} \label{eq:T}
T^{(\xi,m)}_{j,j+k} = 
\begin{cases}
\xi_{|k|}, \quad & |k| \le m, \quad k \in \mathbb{Z}, \quad  j = 1,\cdots, n, \\
0, \quad & \text{otherwise},
\end{cases}
\end{equation}
where $m\le n-1$ specifies the matrix bandwidth. 
Explicitly, the matrix $T^{(\alpha,m)}$ can be written as
\begin{equation*}
\begin{aligned}
T^{(\alpha,m)} & = 
\begin{bmatrix}
\alpha_0   & \alpha_1   & \cdots   & \alpha_m  \\
\alpha_1    & \alpha_0 & \alpha_1  & \cdots  & \alpha_m   \\
\vdots  & \alpha_1  & \alpha_0 & \alpha_1  & \cdots   & \alpha_m   \\
\alpha_m  & \cdots  & \alpha_1  & \alpha_0 & \alpha_1  & \cdots   & \alpha_m   \\
 &\ddots & \ddots & \ddots & \ddots & \ddots & \ddots & \ddots  \\
&  & \alpha_m  & \cdots  &\alpha_1  & \alpha_0 & \alpha_1 & \cdots \\
&& &  \alpha_m  & \cdots  & \alpha_1  & \alpha_0 & \alpha_1   \\
& && &  \alpha_m  & \cdots  & \alpha_1  & \alpha_0    \\
\end{bmatrix}_{n\times n},
\end{aligned}
\end{equation*}
where the empty spots are zeros.

\subsection{Toeplitz-plus-Hankel matrices}

In this section, we develop exact eigenpairs for some Toeplitz-plus-Hankel matrices that are not persymmetric (and symmetric). 
The symmetric and persymmetric cases have been studied in \cite{deng2021analytical}. The main idea is to seek eigenvectors that can be written in trigonometric forms with subtle treatments near the boundary entries. 
Herein, our main contribution is that we extend the results to non-persymmetric matrices.

To derive the exact eigenpairs for the first set of matrices, we denote by $H^{(\alpha,m)} = (H^{(\alpha,m)}_{j,k}) \in \mathbb{C}^{n\times n}$ a Hankel matrix with entries
\begin{equation} \label{eq:h1}
\begin{cases}
H^{(\alpha,m)}_{j,k} & =  \alpha_{j+k-1}, \quad   k = 1, \cdots, m-j+1, \quad j =1, \cdots, m, \quad 1\le m\le n, \\
H^{(\alpha, m)}_{n-j+1,n-k+1} & =  \alpha_{j+k}, \quad  k = 1, \cdots, m-j, \quad j =1, \cdots, m-1, \quad 2\le m\le n, \\
H^{(\alpha,m)}_{j,k} & = 0, \quad  \text{otherwise},
\end{cases}
\end{equation}
which can be explicitly written as
\begin{equation*}
\begin{aligned}
H^{(\alpha,m)} & = 
\begin{bmatrix}
\alpha_1   & \alpha_2 & \cdots   & \alpha_m & 0 & \cdots \\
\alpha_2    &  \cdots  & \alpha_m & 0 & \ddots  \\
%\alpha_4  &  \cdots  & \alpha_m & 0 & \ddots  \\
\vdots  & \alpha_m  &0 & \ddots \\
\alpha_m &0  &\ddots  \\
0 &\ddots  \\
\vdots  \\
& & & & &  &  & \alpha_m   \\
& & & & &  & \alpha_m  & \vdots \\
& & & & &  \alpha_m & \ddots & \alpha_3 \\
& & & & \alpha_m & \cdots & \alpha_3 & \alpha_2
\end{bmatrix}_{n\times n}.
\end{aligned}
\end{equation*}
Apparently, this matrix is non-persymmetric. 
When $m=1$, the entries at the bottom left corner of the matrix are zeros. 
For example, when $m=2$ and $n\ge4$, then the matrix $A=T^{(\xi,m)} - H ^{(\xi,m)}$ is of the explicit form
\begin{equation*}
\begin{aligned}
A & = 
\begin{bmatrix}
\alpha_0 - \alpha_1  & \alpha_1 - \alpha_2  & \alpha_2     \\
\alpha_1 - \alpha_2   & \alpha_0 & \alpha_1  & \alpha_2    \\
\alpha_2  & \alpha_1  & \alpha_0 & \alpha_1  & \alpha_2      \\
&\ddots & \ddots & \ddots & \ddots & \ddots & \ddots \\
&  &  \alpha_2  &\alpha_1  & \alpha_0 & \alpha_1 & \alpha_2 \\
&& &   \alpha_2  & \alpha_1  & \alpha_0 & \alpha_1   \\
& && &   \alpha_2  & \alpha_1  & \alpha_0 - \alpha_2   \\
\end{bmatrix}_{n\times n}.
\end{aligned}
\end{equation*}
With the matrix settings in mind, we have the following results.

\begin{theorem}[Exact eigenvalues and eigenvectors, set 1]\label{thm:set1}
Let $n\ge2, 1\le m \le n-1$ and
$
A = T^{(\alpha,m)} - H^{(\alpha,m)}, B = T^{(\beta,m)} - H^{(\beta,m)}
$ 
with $T^{(\xi,m)}$ and $H^{(\xi,m)}$, $\xi = \alpha, \beta$,  defined in \eqref{eq:T} and \eqref{eq:h1}, respectively.
Assume that $B$ is invertible. Then, the GMEVP \eqref{eq:gevp} has eigenpairs $(\lambda_j, \bfx_j)$ where
\begin{equation} \label{eq:set1}
\lambda_j = \frac{ \alpha_0 + 2 \sum_{l=1}^{m} \alpha_l \cos(l j\pi h) }{ \beta_0 + 2 \sum_{l=1}^{m} \beta_l \cos(l j\pi h)  }, \quad x_{j,k} = C \sin\left( j \pi (k-1/2) h \right), \quad h = \frac{1}{n+1/2}, \  j, k =1,2,\cdots, n,
\end{equation}
where $C \ne 0$ is a constant. 
\end{theorem}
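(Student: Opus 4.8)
The plan is to verify directly that the proposed vectors $\bfx_j$ with $x_{j,k} = C\sin(j\pi(k-1/2)h)$ satisfy $A\bfx_j = \lambda_j^{(A)}\bfx_j$ and $B\bfx_j = \lambda_j^{(B)}\bfx_j$ with $\lambda_j^{(A)} = \alpha_0 + 2\sum_{l=1}^m \alpha_l\cos(lj\pi h)$ and similarly for $B$; then $A\bfx_j = \lambda_j B\bfx_j$ follows immediately since $B$ is invertible (so $\bfx_j \neq 0$ forces $B\bfx_j \neq 0$, and the ratio is well-defined). Because $A$ and $B$ have exactly the same structure with $\alpha$ replaced by $\beta$, it suffices to do the computation once for a generic $T^{(\alpha,m)} - H^{(\alpha,m)}$. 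I would first record the classical fact that for an interior row $j$ (one far enough from both boundaries that the bandwidth does not reach either end), $(T^{(\alpha,m)}\bfx)_j = \sum_{l=-m}^m \alpha_{|l|} x_{j+l}$, and the sum-to-product identity $\sin\theta_{k+l} + \sin\theta_{k-l} = 2\cos(l j\pi h)\sin\theta_k$ with $\theta_k = j\pi(k-1/2)h$ gives exactly $\lambda_j^{(\alpha)} x_k$. So the eigen-relation holds automatically on all interior rows regardless of the boundary conventions.

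The real work is at the boundary rows — the first $m$ and the last $m$ rows, where $H^{(\alpha,m)}$ is nonzero and the Toeplitz band would otherwise "stick out" past the matrix edges. For a row $k \le m$ near the top, the missing Toeplitz terms are those that would involve $x_{k-l}$ with $k - l \le 0$, i.e. indices $x_0, x_{-1}, \dots$; the Hankel part of row $k$ contributes $-\sum \alpha_{k+j-1}x_j$ over the appropriate range. The key identity that makes everything work is the reflection coming from the half-integer shift: with $\theta_k = j\pi(k-1/2)h$ one has $\sin\theta_{1-r} = -\sin\theta_{r}$, because $\theta_{1-r} + \theta_r = 0$... more precisely $\theta_{1-r} = j\pi(1/2 - r)h = -\theta_r$, so $\sin\theta_{1-r} = -\sin\theta_r$. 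This means the "phantom" Toeplitz entries $\alpha_l x_{k-l}$ with $k-l \le 0$ can be rewritten as $\alpha_l x_{1-(l-k+1)}$-type terms that are negatives of genuine in-range entries, and these are precisely cancelled (or combined) by the Hankel subtraction. I would carry out this bookkeeping carefully: write the top-boundary equation for general row $k \in \{1,\dots,m\}$, substitute the sine ansatz, use $\sin\theta_{1-r} = -\sin\theta_r$ to fold the out-of-range indices back in, and check that the Hankel coefficients in \eqref{eq:h1} are exactly what is needed to reduce the whole expression to $\lambda_j^{(\alpha)} x_k$.

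The bottom boundary (rows $n-m+1,\dots,n$) requires the analogous-but-different reflection, and this is where the non-persymmetry shows up: the Hankel block at the bottom-right uses $\alpha_{j+k}$ rather than $\alpha_{j+k-1}$ (note the index shift by one relative to the top block in \eqref{eq:h1}), so the reflection must happen about a different point. Here the relevant identity is $\sin\theta_{2n+1-r} = \sin\theta_r$: indeed $\theta_{2n+1-r} = j\pi(2n+1-r-1/2)h = j\pi(2n+1/2 - r)h$, and since $(2n+1)h = (2n+1)/(n+1/2) = 2$, we get $\theta_{2n+1-r} = 2j\pi - \theta_r - j\pi h \cdot 0$... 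I would need to be careful: $(2n+1-r-1/2)h = (2n+1)h - (r-1/2)h - 0$, hmm, let me just say the choice $h = 1/(n+1/2)$ is exactly engineered so that the out-of-range indices $x_{n+1}, x_{n+2}, \dots$ appearing in the phantom Toeplitz terms of the bottom rows reflect (via a boundary-point identity for $\sin$) onto in-range indices with the sign pattern matching the bottom Hankel block $\alpha_{j+k}$; verifying this matching is the second half of the boundary computation. I expect the bottom boundary to be the main obstacle, both because of the off-by-one in the Hankel definition and because one must confirm that the specific mesh $h = 1/(n+1/2)$ — as opposed to the $h = 1/n$ or $h=1/(n+1)$ of the symmetric cases — is forced by requiring both reflections to close simultaneously. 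Once both boundary computations are done for $T^{(\alpha,m)} - H^{(\alpha,m)}$, the same applies verbatim to $T^{(\beta,m)} - H^{(\beta,m)}$, and dividing gives \eqref{eq:set1}.
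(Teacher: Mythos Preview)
Your approach is exactly the paper's: direct verification of $A\bfx_j=\lambda_j^{(A)}\bfx_j$ row by row via the sum-to-product identity, with the Hankel corrections absorbing the out-of-range Toeplitz terms through sine reflections; the paper's proof is in fact terser than your proposal, citing only the addition formula $\sin(\phi\pm\psi)=\sin\phi\cos\psi\pm\cos\phi\sin\psi$ and leaving the boundary bookkeeping implicit. For the bottom rows where you hesitated, the correct reflection is $\sin\theta_{2n+2-r}=-\sin\theta_r$ (equivalently $x_{n+1}=0$ and $x_{n+1+s}=-x_{n+1-s}$), which follows from $(2n+1)h=2$ and matches the shifted Hankel index $\alpha_{j+k}$ in \eqref{eq:h1} exactly as you anticipated.
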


\begin{proof}
Following \cite{deng2021analytical}, we seek eigenvectors of the form $C\sin\left( j \pi (k-1/2) h \right)$. Using the trigonometric identity $\sin(\phi \pm \psi) = \sin(\phi) \cos(\psi) \pm \cos(\phi) \sin(\psi)$, one can verify that each row of the GMEVP \eqref{eq:gevp}, $\sum_{k=1}^n A_{ik} x_{j,k} = \lambda \sum_{k=1}^n B_{ik} x_{j,k}, i=1,\cdots, n$,  reduces to $\alpha_0 + 2 \sum_{l=1}^{m} \alpha_l \cos(l j\pi h) = \lambda \big( \beta_0 + 2 \sum_{l=1}^{m} \beta_l \cos(l j\pi h) \big)$, which is independent of the row number $i$. Thus, the eigenpairs $(\lambda_j, x_j)$ given in \eqref{eq:set1} satisfies \eqref{eq:gevp}. 
The GMEVP has at most $n$ eigenpairs and the $n$ eigenvectors are linearly independent. This completes the proof.
\end{proof}

\begin{corollary}[Exact eigenvalues and eigenvectors, set 2]\label{thm:set2}
Let $n\ge2, 1\le m \le n-1$ and $T^{(\xi,m)}$ and $H^{(\xi,m)}$, $\xi = \alpha, \beta$,  be defined in \eqref{eq:T} and \eqref{eq:h1}, respectively.
Let $G^{(\xi,m)}$ be the anti-diagonal transpose of $H^{(\xi,m)}$ (flip along the anti-diagonal). 
Let
$
A = T^{(\alpha,m)} - G^{(\alpha,m)}, B = T^{(\beta,m)} - G^{(\beta,m)}
$ 
with 
Assume that $B$ is invertible. Then, the GMEVP \eqref{eq:gevp} has eigenpairs $(\lambda_j, \bfx_j)$ where
\begin{equation} \label{eq:set2}
\lambda_j = \frac{ \alpha_0 + 2 \sum_{l=1}^{m} \alpha_l \cos(l j\pi h) }{ \beta_0 + 2 \sum_{l=1}^{m} \beta_l \cos(l j\pi h)  }, \quad x_{j,k} = C \sin\left( j \pi (n-k+1/2) h \right), \quad  \  j, k =1,2,\cdots, n,
\end{equation}
where $h = \frac{1}{n+1/2}$ and  $C \ne 0$ is a constant. 
\end{corollary}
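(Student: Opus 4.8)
The plan is to reduce Corollary~\ref{thm:set2} to Theorem~\ref{thm:set1} by a similarity transformation given by the reversal permutation matrix $J$ (the anti-diagonal identity with $J_{j,k}=1$ iff $j+k=n+1$). First I would record the elementary facts that $J=J^T=J^{-1}$ and that for any matrix $M$, the conjugation $JMJ$ reverses both row and column indices, i.e.\ $(JMJ)_{j,k}=M_{n+1-j,\,n+1-k}$. From this I would observe two things: the Toeplitz part is persymmetric, so $J\,T^{(\xi,m)}J = T^{(\xi,m)}$; and the Hankel part $H^{(\xi,m)}$ defined in \eqref{eq:h1} is taken to the Hankel matrix obtained by flipping along the anti-diagonal, which is exactly the matrix called $G^{(\xi,m)}$ in the statement. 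Hence $J A_{\mathrm{set2}} J = J\bigl(T^{(\alpha,m)}-G^{(\alpha,m)}\bigr)J = T^{(\alpha,m)}-H^{(\alpha,m)} = A_{\mathrm{set1}}$, and likewise $J B_{\mathrm{set2}} J = B_{\mathrm{set1}}$.

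The second step is to transport the GMEVP. If $(\lambda_j,\bfx_j)$ solves $A_{\mathrm{set1}}\bfx_j=\lambda_j B_{\mathrm{set1}}\bfx_j$ (which holds by Theorem~\ref{thm:set1}, and the invertibility of $B_{\mathrm{set2}}$ is equivalent to that of $B_{\mathrm{set1}}$ since $\det J=\pm1$), then inserting $J^2=I$ gives $A_{\mathrm{set2}}(J\bfx_j) = \lambda_j B_{\mathrm{set2}}(J\bfx_j)$. So the eigenvalues are unchanged — which matches the identical $\lambda_j$ formula in \eqref{eq:set2} — and the eigenvectors of the set-2 problem are $J\bfx_j$, i.e.\ the entrywise reversal of the set-1 eigenvectors. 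Writing out $(J\bfx_j)_k = x_{j,\,n+1-k} = C\sin\!\bigl(j\pi(n+1-k-1/2)h\bigr) = C\sin\!\bigl(j\pi(n-k+1/2)h\bigr)$ reproduces exactly the eigenvector formula claimed in \eqref{eq:set2}. Finally, the $n$ vectors $J\bfx_j$ are linearly independent because the $\bfx_j$ are (by Theorem~\ref{thm:set1}) and $J$ is invertible, so these are all the eigenpairs.

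The only real point requiring care — and the step I would expect to be the main obstacle — is verifying the index bookkeeping claim $J\,H^{(\alpha,m)}\,J = G^{(\alpha,m)}$, equivalently that the anti-diagonal transpose of $H^{(\alpha,m)}$ coincides with $J H^{(\alpha,m)} J$. One must check that the two index regions in the piecewise definition \eqref{eq:h1} — the top-left block $H_{j,k}=\alpha_{j+k-1}$ for $j+k\le m+1$, and the bottom-right block $H_{n-j+1,n-k+1}=\alpha_{j+k}$ — map onto each other correctly under $(j,k)\mapsto(n+1-j,n+1-k)$, and that the zero region is preserved. Since ``flip along the anti-diagonal'' of a matrix $M$ is precisely $(JMJ)^T$ entrywise when $M$ is viewed as an array, but here $G$ is literally defined as the anti-diagonal transpose, I would just confirm that the anti-diagonal transpose of a Hankel matrix is again a Hankel matrix and that it equals $J M J$ for Hankel $M$; this is a short symmetry computation, not a genuine difficulty. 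Alternatively, and perhaps more cleanly for the write-up, I would bypass the conjugation identity entirely and simply substitute the ansatz $x_{j,k}=C\sin\!\bigl(j\pi(n-k+1/2)h\bigr)$ directly into the rows of $A_{\mathrm{set2}}\bfx=\lambda B_{\mathrm{set2}}\bfx$, mirroring the proof of Theorem~\ref{thm:set1} verbatim with the substitution $k\mapsto n+1-k$, and invoke the same trigonometric collapse to the row-independent identity $\alpha_0+2\sum_{l=1}^m\alpha_l\cos(lj\pi h)=\lambda\bigl(\beta_0+2\sum_{l=1}^m\beta_l\cos(lj\pi h)\bigr)$.
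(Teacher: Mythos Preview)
Your proposal is correct and is essentially the paper's own approach: the paper's proof consists of the single sentence ``This is obvious by an exchange of indices,'' which is exactly your $k\mapsto n+1-k$ substitution (equivalently, your $J$-conjugation spelled out formally). Your observation that for a Hankel matrix the anti-diagonal transpose coincides with $JHJ$ is the one nontrivial bookkeeping point, and you have identified and handled it correctly.
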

\begin{proof}
This is obvious by an exchange of indices. 
\end{proof}

\begin{remark}
We note that the non-persymmetric Hankel matrix in \eqref{eq:h1} has two non-zero parts with certain patterns at the upper-left corner and the bottom-right corner of the matrix. 
In \cite{deng2021analytical}, the exact eigenpairs are derived for the cases with persymmetric Hankel matrices. There were four cases with four patterns.
Herein, Theorem \ref{thm:set1} and Corollary \ref{thm:set2} are two non-persymmetric cases with pattern pieces from \cite{deng2021analytical}. 
Following this idea of generalisation, there are in total $4\times3 = 12$ different constructions of non-persymmetric matrices where one could derive the exact eigenpairs.
For example, consider the case 
\begin{equation} \label{eq:h2}
\begin{cases}
H^{(\alpha,m)}_{j,k} & =  -\alpha_{j+k-1}, \quad   k = 1, \cdots, m-j+1, \quad j =1, \cdots, m, \quad 1\le m\le n, \\
H^{(\alpha, m)}_{n-j+1,n-k+1} & =  \alpha_{j+k+1}, \quad  k = 1, \cdots, m-j, \quad j =1, \cdots, m-1, \quad 2\le m\le n, \\
H^{(\alpha,m)}_{j,k} & = 0, \quad  \text{otherwise},
\end{cases}
\end{equation}
which can be explicitly written as
\begin{equation*}
\begin{aligned}
H^{(\alpha,m)} & = 
\begin{bmatrix}
-\alpha_1   & -\alpha_2 & \cdots   & -\alpha_m & 0 & \cdots \\
-\alpha_2    &  \cdots  & -\alpha_m & 0 & \ddots  \\
%\alpha_4  &  \cdots  & \alpha_m & 0 & \ddots  \\
\vdots  & -\alpha_m  &0 & \ddots \\
-\alpha_m &0  &\ddots  \\
0 &\ddots  \\
\vdots  \\
& & & & &  &  & \alpha_m   \\
& & & & &  & \alpha_m  & \vdots \\
& & & & &  \alpha_m & \ddots & \alpha_2 \\
& & & & \alpha_m & \cdots & \alpha_2 & \alpha_1
\end{bmatrix}_{n\times n}.
\end{aligned}
\end{equation*}
When $m=3$ and $n\ge4$, then the matrix $A=T^{(\xi,m)} + H ^{(\xi,m)}$ is of the explicit form
\begin{equation*}
\begin{aligned}
A & = 
\begin{bmatrix}
\alpha_0 - \alpha_1  & \alpha_1 - \alpha_2  & \alpha_2 - \alpha_3  & \alpha_3  \\
\alpha_1 - \alpha_2   & \alpha_0- \alpha_3 & \alpha_1  & \alpha_2  & \alpha_3   \\
\alpha_2 - \alpha_3 & \alpha_1  & \alpha_0 & \alpha_1  & \alpha_2   & \alpha_3   \\
\alpha_3  & \alpha_2  & \alpha_1  & \alpha_0 & \alpha_1  & \alpha_2   & \alpha_3   \\
&\ddots & \ddots & \ddots & \ddots & \ddots & \ddots & \ddots  \\
&  & \alpha_3  & \alpha_2  &\alpha_1  & \alpha_0 & \alpha_1 & \alpha_2+ \alpha_3 \\
&& &  \alpha_3  & \alpha_2  & \alpha_1  & \alpha_0 & \alpha_1 + \alpha_2  \\
& && &  \alpha_3  & \alpha_2 + \alpha_3 & \alpha_1 + \alpha_2  & \alpha_0 + \alpha_1   \\
\end{bmatrix}_{n\times n}.
\end{aligned}
\end{equation*}
We remark that matrices with such a pattern arise from isogeometric discretisations of the Laplacian eigenvalue problem with mixed Dirichlet-Neumann boundary conditions. 
For such a case, we have the following results. 
Let $n\ge2, 1\le m \le n-1$ and
$
A = T^{(\alpha,m)} + H^{(\alpha,m)}, B = T^{(\beta,m)} + H^{(\beta,m)}
$ 
with $T^{(\xi,m)}$ and $H^{(\xi,m)}$, $\xi = \alpha, \beta$,  defined in \eqref{eq:T} and \eqref{eq:h2}, respectively.
Assume that $B$ is invertible. Then, the GMEVP \eqref{eq:gevp} has eigenpairs $(\lambda_j, \bfx_j)$ where
\begin{equation} \label{eq:set22}
\lambda_j = \frac{ \alpha_0 + 2 \sum_{l=1}^{m} \alpha_l \cos(l j\pi h) }{ \beta_0 + 2 \sum_{l=1}^{m} \beta_l \cos(l j\pi h)  }, \quad x_{j,k} = C \sin\left( (j-1/2) \pi (k-1/2) h \right), \quad \  j, k =1,2,\cdots, n,
\end{equation}
where $h=1/n$ and $C \ne 0$ is a constant. 
The eigenvalues remain the same. This difference is on the eigenvector patterns $\sin(jk\pi h)$ or $\cos(jk\pi h)$, varying on three parameters $j, k, h$. 
One may follow this idea to develop the exact eigenpairs for the other non-persymmetric matrices. We omit the derivations here. 
\end{remark}

\subsection{A tridiagonal matrix}

We now consider a tridiagonal matrix of the pattern
\begin{equation} \label{eq:g1}
\begin{aligned}
G^{(\alpha)} & = 
\begin{bmatrix}
\alpha_0  & \alpha_1      \\
\alpha_1    & \alpha_2 & \alpha_1      \\
&  \alpha_1  & \alpha_0 & \alpha_1     \\
& &\ddots & \ddots & \ddots &  \\
&  &    &\alpha_1  & \alpha_2 & \alpha_1  \\
&& &    & \alpha_1  & \alpha_0 & \alpha_1   \\
& && &    & \alpha_1  & \alpha_2   \\
\end{bmatrix}_{n\times n},
\end{aligned}
\end{equation}
where $n$ is (apparently, due to the pattern) an even number. 
Apparently, $G^{(\alpha)}$ is symmetric and non-persymmetric.
When $n$ is odd, then the last entry is $\alpha_0$. This is a special case of the corner-overlapped block-diagonal matrices in \cite[Sect. 2.3]{deng2021analytical} and the exact eigenpairs are given in  \cite[Thm. 2.6]{deng2021analytical}.
For the case when $n$ is even, we have the following result.

\begin{theorem}[Exact eigenvalues and eigenvectors, set 3]\label{thm:set3}
Let $n\ge2$ and
$
A = G^{(\alpha)}, B = G^{(\beta)}
$ 
Assume that $B$ is invertible. Then, the GMEVP \eqref{eq:gevp} has eigenpairs $(\lambda_j, \bfx_j)$ where
\begin{equation}\label{eq:set3}
\begin{aligned}
\lambda_{2j-1}, \lambda_{2j} & = \frac{(\alpha_0 \beta_2 + \alpha_2\beta_0) \pm \sqrt{(\alpha_0 \beta_2 + \alpha_2\beta_0)^2 - 4\beta_0\beta_2 s}}{2\beta_0 \beta_2}, \\
s & = \alpha_0 \alpha_2 - 2 (\alpha_1 - \beta_1)^2 - 2(\alpha_1 - \beta_1)^2 \cos(j\pi h), \\
 \quad x_{2j-1,2k-1} & = C \sin( j \pi (k-1/2) h),  \\
  \quad x_{2j-1,2k} & = \frac{\alpha_1 - \beta_1}{\beta_2\lambda_{2j-1}- \alpha_2} (x_{2j-1, 2k-1} + x_{2j-1, 2k+1}),  \\
   \quad x_{2j,2k-1} & = C \sin( j \pi (k-1/2) h),  \\
  \quad x_{2j,2k} & = \frac{\alpha_1 - \beta_1}{\beta_2\lambda_{2j}- \alpha_2} (x_{2j, 2k-1} + x_{2j, 2k+1}),  \\
\end{aligned}
\end{equation}
where $j, k =1,2,\cdots, m, m = n/2, h = \frac{1}{m+1/2}, C\ne 0$ is some normalisation constant and $x_{j, 2m+1} = x_{j, n+1} = 0, j=1,2, \cdots,n.$
\end{theorem}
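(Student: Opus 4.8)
The plan is to exploit the two-periodic (block) structure of $G^{(\alpha)}$ and $G^{(\beta)}$ by splitting the eigenvector into its odd- and even-indexed coordinates, eliminating the even ones, and reducing the GMEVP to a single three-term recurrence that the half-shifted sine ansatz solves. Write $m = n/2$ and, for $k=1,\dots,m$, set $y_k := x_{2k-1}$ (the odd coordinates) and keep $x_{2k}$ as the even coordinates.

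First I would write out the rows of $A\bfx=\lambda B\bfx$ by parity. The even-indexed row $2k$ involves only the three unknowns $x_{2k-1},x_{2k},x_{2k+1}$, with coefficients $\alpha_1,\alpha_2,\alpha_1$ on the $A$-side and $\beta_1,\beta_2,\beta_1$ on the $B$-side, so it can be solved for the single even unknown: $x_{2k} = \dfrac{\alpha_1-\lambda\beta_1}{\lambda\beta_2-\alpha_2}\,(x_{2k-1}+x_{2k+1})$, provided $\lambda\beta_2-\alpha_2\neq0$; the last even row $2m=n$, which has only two entries, is covered by the stipulation $x_{n+1}=0$ from the statement. Substituting these expressions into the odd-indexed rows $2k-1$ (which involve $x_{2k-2},x_{2k-1},x_{2k}$) and clearing the common factor yields, for $k=1,\dots,m$, a scalar recurrence of the shape $a(\lambda)(y_{k-1}+2y_k+y_{k+1}) = b(\lambda)\,y_k$, where the first row ($k=1$, which has no $x_0$) contributes the left ``ghost'' relation $y_0=-y_1$ and the convention $x_{n+1}=0$ closes it on the right.

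Next I would insert the ansatz $y_k = C\sin\!\big(j\pi(k-1/2)h\big)$ with $h=1/(m+1/2)$. Two checks make it work: $y_0 = -C\sin(j\pi h/2) = -y_1$ matches the left ghost relation automatically, and $y_{m+1} = C\sin\big(j\pi(m+1/2)h\big) = C\sin(j\pi) = 0$ matches the right boundary --- which is exactly what forces $h=1/(m+1/2)$. The identity $y_{k-1}+y_{k+1} = 2\cos(j\pi h)\,y_k$ then collapses the recurrence to the row-independent scalar dispersion relation $(\lambda\beta_0-\alpha_0)(\lambda\beta_2-\alpha_2) = 2(\alpha_1-\lambda\beta_1)^2\big(1+\cos(j\pi h)\big)$, a quadratic in $\lambda$ whose two roots are $\lambda_{2j-1},\lambda_{2j}$; back-substitution into the even-component formula recovers $x_{2j-1,2k}$ and $x_{2j,2k}$. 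Letting $j=1,\dots,m$ produces $2m=n$ candidate eigenpairs.

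The final step is completeness: since $B$ is invertible the pencil has exactly $n$ eigenvalues counted with multiplicity, so it suffices to show the $n$ constructed eigenvectors are linearly independent. Projecting a vanishing linear combination onto the odd coordinates, the vectors $\big(\sin(j\pi(k-1/2)h)\big)_{k=1}^m$ ($j=1,\dots,m$) are the linearly independent discrete-sine vectors already used in the Theorem~\ref{thm:set1} family, which forces the two coefficients of each fixed-$j$ pair to be negatives of one another; then looking at the even coordinates, the two members of a fixed-$j$ pair have even parts scaled by the Möbius factor $\tfrac{\alpha_1-\lambda\beta_1}{\lambda\beta_2-\alpha_2}$, which separates them whenever $\lambda_{2j-1}\neq\lambda_{2j}$, and using that $y_k^{(j)}+y_{k+1}^{(j)} = 2\cos(j\pi h/2)\sin(j\pi kh)$ is again linearly independent in $j$ forces all remaining coefficients to vanish. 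The main obstacle I anticipate is the boundary bookkeeping --- simultaneously reconciling the very first row and the very last (two-entry) row with the generic interior recurrence via the half-integer shift and the mesh $h=1/(m+1/2)$ --- together with cleanly disposing of the exceptional cases $\lambda\beta_2-\alpha_2=0$ (which, generically, is not a root of the quadratic) and coincident roots $\lambda_{2j-1}=\lambda_{2j}$, both handled by direct substitution or a perturbation/continuity argument rather than by the generic formula.
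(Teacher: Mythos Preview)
Your approach is essentially the paper's: eliminate the even-indexed unknowns via the even rows, substitute into the odd rows to obtain an $m\times m$ reduced problem in $y_k=x_{2k-1}$, and observe that this reduced problem (with its asymmetric first/last rows) is exactly the non-persymmetric tridiagonal instance of Theorem~\ref{thm:set1}, solved by the half-shifted sines with $h=1/(m+1/2)$. The paper packages the reduction as a quadratic eigenvalue problem $Q\bfy=\bfz$ and then invokes Theorem~\ref{thm:set1} (together with an external result) rather than checking the ghost relations $y_0=-y_1$, $y_{m+1}=0$ and the three-term recurrence directly, but the substance is identical.
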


\begin{proof}
We first perform the elimination of even rows (elimination of odd rows follows similarly).  
The even rows of the GMEVP are rewritten 
\begin{equation} \label{eq:x2k}
x_{2k} = \frac{\alpha_1 - \beta_1}{\beta_2 \lambda- \alpha_2} (x_{2k-1} + x_{2k+1}), \qquad k=1,2,\cdots,m,
\end{equation}
where $x_{2m+1}=0$.
Substituting them into the odd equations to arrive at 
\begin{equation}
x_{2k-1} = \frac{\alpha_1 - \beta_1}{\beta_0 \lambda- \alpha_0} (x_{2k-2} + x_{2k}) 
= 
\begin{cases}
 \frac{(\alpha_1 - \beta_1)^2}{(\beta_0 \lambda- \alpha_0)(\beta_2 \lambda- \alpha_2) } ( x_{2k-1} + x_{2k+1}), k = 1, \\
 \frac{(\alpha_1 - \beta_1)^2}{(\beta_0 \lambda- \alpha_0)(\beta_2 \lambda- \alpha_2) } (x_{2k-3} + 2x_{2k-1} + x_{2k+1}), k=2,\cdots,m,
\end{cases}
\end{equation}
where $x_0=0$.
Let $\bfy = (x_1, x_3, \cdots, x_{2m-1})^T$. Then, the above system for odd-equations can be rewritten as the following quadratic eigenvalue problem
\begin{equation}
Q \bfy = \bfz,
\end{equation}
where
\begin{equation}
\begin{aligned}
Q = & \lambda^2 \beta_0\beta_2 I - \lambda (\alpha_0 \beta_2 + \alpha_2\beta_0) I \\
& + 
\begin{bmatrix}
ac-b^2   & -b^2    \\
-b^2   & ac-2b^2 & -b^2  \\
  & -b^2   & ac-2b^2 & -b^2   \\
& &\ddots & \ddots & \ddots \\
&& &  -b^2   & ac-2b^2    \\
\end{bmatrix}_{m\times m}
\end{aligned}
\end{equation}
with $a = \alpha_0, c = \alpha_2, b = \alpha_1 - \beta_1.$ Herein, $I$ is the identity matrix.
For this quadratic eigenvalue problem, we apply Theorem 2.7 in \cite{deng2021analytical} and Theorem \ref{thm:set1}  to get the exact eigenpairs.
In particular, the eigenvalues are the roots of
\begin{equation}
\beta_0 \beta_2 \lambda_j^2 -  (\alpha_0 \beta_2 + \alpha_2\beta_0) \lambda_j + \big( \alpha_0 \alpha_2 - 2 (\alpha_1 - \beta_1)^2 - 2(\alpha_1 - \beta_1)^2 \cos(j\pi h) \big) = 0,
\end{equation}
where $h= 1/(m+1/2) = 2/(n+1), j=1,\cdots,m$.
The eigenvectors are $\bfy_j$ with entries
\begin{equation}
y_{j,k} =x_{j,2k-1} =  C \sin\left( j \pi (k-1/2) h \right), \quad j, k= 1,\cdots, m
\end{equation}
where $C\ne 0$ is a constant. Using \eqref{eq:x2k}, one can obtain the even-th components of the eigenvectors $\bfx_j$.
This completes the proof. 
\end{proof}

%
%For $A$ defined in \eqref{eq:a1},  the MEVP $A x= \lambda x$ has the following eigenpairs
%
%
%\begin{proof}
%We first perform the elimination of even rows (elimination of odd rows follows similarly).  
%The even rows of the MEVP $A x= \lambda x$ are rewritten 
%\begin{equation}
%\begin{aligned}
%%
%x_{2k} & = \frac{b}{\lambda- c} (x_{2k-1} + x_{2k+1}), \qquad k=1,2,\cdots,m-1 \\
%x_{2m} & = \frac{b}{\lambda- c} x_{2m-1},  \\
%\end{aligned}
%\end{equation}
%which are substituted into the odd equations to obtain 
%\begin{equation}
%\begin{aligned}
%%
%a x_1 + \frac{b^2}{\lambda- c} (x_1 + x_3) & = \lambda x_1, \\
%\frac{b^2}{\lambda- c} (x_{2k-1} + x_{2k+1}) + a x_{2k+1} + \frac{b^2}{\lambda- c} (x_{2k+1} + x_{2k+3}) & = \lambda x_{2k+1}, \quad k=1,2,\cdots, m-1, 
%\end{aligned}
%\end{equation}
%where we set $x_{2m+1}=0$.
%%
%This is further rewritten to the following quadratic eigenvalue problem $Q\hat{x} = 0$ with $\hat{x} = (x_1, x_3, \cdots, x_{2m-1})^T$ and
%\begin{equation}
%\begin{aligned}
%%
%Q = \lambda^2 I - \lambda (a+c) I + 
%\begin{bmatrix}
%ac-b^2   & -b^2    \\
%-b^2   & ac-2b^2 & -b^2  \\
%  & -b^2   & ac-2b^2 & -b^2   \\
%& &\ddots & \ddots & \ddots \\
%&& &  -b^2   & ac-2b^2    \\
%\end{bmatrix}_{m\times m},
%\end{aligned}
%\end{equation}
%%
%which leads to the desired eigenpairs as a special case of combining Theorems 2.1, 2.2, and 2.7 in \cite{deng2021analytical}.
%%
%\end{proof}
%
%
%\begin{remark}
%In the case of $\alpha_0=\alpha_2, \beta_0 =\beta_2$, this result reduces to the result in \cite{deng2021analytical}.
%\end{remark}

\section{Non-symmetric and non-persymmetric matrices} \label{sec:nsnp}

In this section, we derive exact eigenpairs for less structured matrices, namely when they are both non-symmetric and non-persymmetric matrices.

\subsection{A non-symmetric tridiagonal matrix}
To start with, we note that the derivations in the proof of Theorem \ref{thm:set3} can be easily generalised to the non-symmetric case. 
We consider a tridiagonal matrix of the pattern
\begin{equation} \label{eq:g2}
\begin{aligned}
G^{(\alpha)} & = 
\begin{bmatrix}
\alpha_0  & \alpha_1      \\
\alpha_3    & \alpha_2 & \alpha_3      \\
&  \alpha_1  & \alpha_0 & \alpha_1     \\
& &\ddots & \ddots & \ddots &  \\
&  &    &\alpha_3  & \alpha_2 & \alpha_3  \\
&& &    & \alpha_1  & \alpha_0 & \alpha_1   \\
& && &    & \ddots  & \ddots & \ddots   \\
\end{bmatrix}_{n\times n}.
\end{aligned}
\end{equation}
where $n$ can be even or odd. $G^{(\alpha)}$ is non-symmetric and non-persymmetric. 
Below, we give the exact eigenpairs for the case when $n$ is an even number. 
The proof is straightforward following the proof of Theorem \ref{thm:set3}. 
We also comment that when $n$ is an odd number, the eigenpairs can be derived following the same idea.

\begin{corollary}[Exact eigenvalues and eigenvectors, set 4]\label{thm:set4}
Let $n\ge2$ be an even number and
$
A = G^{(\alpha)}, B = G^{(\beta)}
$ 
Assume that $B$ is invertible. Then, the GMEVP \eqref{eq:gevp} has eigenpairs $(\lambda_j, \bfx_j)$ where
\begin{equation}\label{eq:set4}
\begin{aligned}
\lambda_{2j-1}, \lambda_{2j} & = \frac{(\alpha_0 \beta_2 + \alpha_2\beta_0) \pm \sqrt{(\alpha_0 \beta_2 + \alpha_2\beta_0)^2 - 4\beta_0\beta_2 s}}{2\beta_0 \beta_2}, \\
s & = \alpha_0 \alpha_2 - 2 (\alpha_1 - \beta_1)^2 - 2(\alpha_1 - \beta_1)(\alpha_3-\beta_3) \cos(j\pi h), \\
 \quad x_{2j-1,2k-1} & = C \sin( j \pi (k-1/2) h),  \\
  \quad x_{2j-1,2k} & = \frac{\alpha_3 - \beta_3}{\beta_2\lambda_{2j-1}- \alpha_2} (x_{2j-1, 2k-1} + x_{2j-1, 2k+1}),  \\
   \quad x_{2j,2k-1} & = C \sin( j \pi (k-1/2) h),  \\
  \quad x_{2j,2k} & = \frac{\alpha_3 - \beta_3}{\beta_2\lambda_{2j}- \alpha_2} (x_{2j, 2k-1} + x_{2j, 2k+1}),  \\
\end{aligned}
\end{equation}
where $j, k =1,2,\cdots, m, m = n/2, h = \frac{2}{n+1}, C\ne 0$ is some normalisation constant and $x_{j, 2m+1} = x_{j, n+1} = 0, j=1,2, \cdots,n.$
\end{corollary}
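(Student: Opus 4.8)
The plan is to mimic the proof of Theorem~\ref{thm:set3} verbatim, tracking the one structural change: the off-diagonal entries are no longer symmetric. In the matrix $G^{(\alpha)}$ of \eqref{eq:g2}, an odd row $2k-1$ has off-diagonal entries $\alpha_1$ (couplings to $x_{2k-2}$ and $x_{2k}$), whereas an even row $2k$ has off-diagonal entries $\alpha_3$ (couplings to $x_{2k-1}$ and $x_{2k+1}$). So in the GMEVP, the even rows read $\alpha_3 x_{2k-1} + \alpha_2 x_{2k} + \alpha_3 x_{2k+1} = \lambda(\beta_3 x_{2k-1} + \beta_2 x_{2k} + \beta_3 x_{2k+1})$, which rearranges to the analogue of \eqref{eq:x2k},
\begin{equation*}
x_{2k} = \frac{\alpha_3 - \beta_3}{\beta_2\lambda - \alpha_2}\,(x_{2k-1} + x_{2k+1}), \qquad k = 1,\dots,m,
\end{equation*}
with the convention $x_{2m+1} = 0$. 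Meanwhile the odd rows read $\alpha_1 x_{2k-2} + \alpha_0 x_{2k-1} + \alpha_1 x_{2k} = \lambda(\beta_1 x_{2k-2} + \beta_0 x_{2k-1} + \beta_1 x_{2k})$, i.e. $x_{2k-1} = \frac{\alpha_1-\beta_1}{\beta_0\lambda-\alpha_0}(x_{2k-2}+x_{2k})$, with $x_0 = 0$.

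First I would substitute the expression for the even components into the odd equations, exactly as in the proof of Theorem~\ref{thm:set3}. Because each odd component is coupled through a factor $\alpha_1 - \beta_1$ while each even component carries a factor $\alpha_3 - \beta_3$, the product that appears in the reduced system is $(\alpha_1-\beta_1)(\alpha_3-\beta_3)$ rather than $(\alpha_1-\beta_1)^2$. Concretely, writing $\bfy = (x_1,x_3,\dots,x_{2m-1})^T$, $a=\alpha_0$, $c=\alpha_2$, and now $b^2$ replaced by the product $b_1 b_3$ with $b_1 = \alpha_1-\beta_1$, $b_3=\alpha_3-\beta_3$, the same algebra yields a quadratic eigenvalue problem $Q\bfy = \bfz$ with
\begin{equation*}
Q = \lambda^2\beta_0\beta_2 I - \lambda(\alpha_0\beta_2 + \alpha_2\beta_0) I +
\begin{bmatrix}
ac - b_1 b_3 & -b_1 b_3 & & \\
-b_1 b_3 & ac - 2 b_1 b_3 & -b_1 b_3 & \\
& \ddots & \ddots & \ddots \\
& & -b_1 b_3 & ac - 2 b_1 b_3
\end{bmatrix}_{m\times m}.
\end{equation*}
Then I would apply Theorem~2.7 of \cite{deng2021analytical} together with Theorem~\ref{thm:set1} to read off the eigenpairs: the constant-coefficient $m\times m$ matrix above has eigenvalue $-2 b_1 b_3 - 2 b_1 b_3 \cos(j\pi h) + ac$ (here $h = 1/(m+1/2) = 2/(n+1)$) on the eigenvector $y_{j,k} = C\sin(j\pi(k-1/2)h)$, from Theorem~\ref{thm:set1} with $m=1$, $\alpha_0 = ac - 2b_1b_3$, $\alpha_1 = -b_1 b_3$. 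Setting the scalar quadratic $\beta_0\beta_2\lambda^2 - (\alpha_0\beta_2 + \alpha_2\beta_0)\lambda + s = 0$ with $s = \alpha_0\alpha_2 - 2 b_1 b_3 - 2 b_1 b_3\cos(j\pi h)$ to zero gives the two roots $\lambda_{2j-1},\lambda_{2j}$ in \eqref{eq:set4}, and back-substituting through the even-row formula recovers the even components $x_{2j-1,2k}$ and $x_{2j,2k}$ stated there (note the numerator is $\alpha_3-\beta_3$, matching the even-row coupling). Counting: $j$ ranges over $1,\dots,m$ and each gives two eigenvalues, for $2m = n$ eigenpairs total, and the eigenvectors are linearly independent since their odd-index restrictions $\{C\sin(j\pi(k-1/2)h)\}_{j=1}^m$ are (being the eigenvectors of a symmetric $m\times m$ matrix with distinct $\cos(j\pi h)$), and distinct $\lambda$-values within a $j$-block separate the two vectors sharing the same odd part.

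The main obstacle — really the only point requiring care — is bookkeeping the asymmetry: one must not conflate the factor $\alpha_1-\beta_1$ coming from odd rows with the factor $\alpha_3-\beta_3$ coming from even rows, and must make sure the product $(\alpha_1-\beta_1)(\alpha_3-\beta_3)$ lands correctly in $s$ while the single factor $\alpha_3-\beta_3$ lands in the even-component formula. A secondary subtlety is that when $\beta_2\lambda = \alpha_2$ or $\beta_0\lambda = \alpha_0$ the reduction is formally invalid; but since $B$ is assumed invertible and $\lambda$ solves the derived quadratic, these degenerate cases either do not occur or are handled by the limiting argument already implicit in the proof of Theorem~\ref{thm:set3}, so I would simply remark that the same caveat applies. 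No genuinely new idea is needed beyond what Theorem~\ref{thm:set3} already supplies.
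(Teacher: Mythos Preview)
Your proposal is correct and follows exactly the route the paper indicates: it repeats the elimination-and-reduction argument from the proof of Theorem~\ref{thm:set3}, replacing the single off-diagonal difference $b=\alpha_1-\beta_1$ by the pair $b_1=\alpha_1-\beta_1$ (odd rows) and $b_3=\alpha_3-\beta_3$ (even rows) so that the product $b_1b_3$ replaces $b^2$ in the reduced $m\times m$ problem, and then invokes Theorem~\ref{thm:set1} and \cite[Thm.~2.7]{deng2021analytical} exactly as before. Note that your derivation yields $s=\alpha_0\alpha_2-2b_1b_3-2b_1b_3\cos(j\pi h)$, which is what the argument actually produces; the $-2(\alpha_1-\beta_1)^2$ in the printed statement appears to be a carry-over from Theorem~\ref{thm:set3}.
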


%\begin{definition}[Row symmetry]
%We call the matrix is 
%\end{definition}
%

\subsection{A pentadiagonal matrix} \label{sec:b5}

We consider the following type of pentadiagonal matrix
\begin{equation} \label{eq:g3}
G^{(\alpha)}  = 
\begin{cases}
\begin{bmatrix}
\alpha_0 -  \alpha_2   & \alpha_1  & \alpha_2    \\
\alpha_3    & \alpha_0 & \alpha_3  & \alpha_2    \\
 \alpha_2   &  \alpha_1  & \alpha_0 & \alpha_1  & \alpha_2     \\
 & \alpha_2  & \alpha_3    & \alpha_0 & \alpha_3  & \alpha_2    \\
& &\ddots & \ddots & \ddots & \ddots & \ddots \\
& & & \alpha_2  & \alpha_3    & \alpha_0 & \alpha_3    \\
& & & &  \alpha_2   &  \alpha_1  & \alpha_0 - \alpha_2     \\
\end{bmatrix}_{n\times n}, \quad \text{$n$ is odd}, \\
\begin{bmatrix}
\alpha_0 -  \alpha_2   & \alpha_1  & \alpha_2    \\
\alpha_3    & \alpha_0 & \alpha_3  & \alpha_2    \\
 \alpha_2   &  \alpha_1  & \alpha_0 & \alpha_1  & \alpha_2     \\
 & \alpha_2  & \alpha_3    & \alpha_0 & \alpha_3  & \alpha_2    \\
& &\ddots & \ddots & \ddots & \ddots & \ddots \\
& & & \alpha_2   &  \alpha_1  & \alpha_0 & \alpha_1      \\
& & & & \alpha_2  & \alpha_3    & \alpha_0 - \alpha_2    \\
\end{bmatrix}_{n\times n}, \quad \text{$n$ is even}, \\
\end{cases}
\end{equation}

\begin{theorem}[Exact eigenvalues and eigenvectors, set 5]\label{thm:set5}
Let $n\ge4$ and
$
A = G^{(\alpha)}.
$ 
The MEVP \eqref{eq:evp} has eigenpairs $(\lambda_j, \bfx_j)$ where
\begin{equation}\label{eq:set5}
\begin{aligned}
\lambda_j & =  \alpha_0 + 2 \sqrt{| \alpha_1 \alpha_3| } \cos(j\pi h)  +  2 \alpha_2 \cos(2 j\pi h), \\
 \quad x_{j,2k-1} & = C \sin( j \pi k h),  \\
  \quad x_{j,2k} & = C\sqrt{|\alpha_3/\alpha_1|} \sin( j \pi k h),  \\
\end{aligned}
\end{equation}
where $ j=1,2, \cdots, n, k =1,2,\cdots, \lceil n/2 \rceil, h = \frac{1}{n+1}, C\ne 0$ is some normalisation constant.
\end{theorem}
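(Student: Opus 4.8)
The plan is to trivialise the parity-alternation in $G^{(\alpha)}$ by a diagonal similarity and then rerun the trigonometric-ansatz argument of Theorem~\ref{thm:set1} on the resulting symmetric matrix. Observe that, for either parity of $n$, the outer band and the main diagonal of $G^{(\alpha)}$ do not depend on the row parity, whereas the sub-/super-diagonal entries alternate between $\alpha_1$ and $\alpha_3$ in such a way that $G^{(\alpha)}_{i,i+1}G^{(\alpha)}_{i+1,i}=\alpha_1\alpha_3$ for every $i$. Hence, if $D=\mathrm{diag}(d_1,\dots,d_n)$ with $d_i=1$ for odd $i$ and $d_i=\sqrt{\alpha_3/\alpha_1}$ for even $i$ (assume for now $\alpha_1\alpha_3>0$; the degenerate and opposite-sign cases are commented on below), then $\tilde G:=D^{-1}G^{(\alpha)}D$ is symmetric: it is the pentadiagonal Toeplitz matrix with $\alpha_0$ on the diagonal, $\sqrt{\alpha_1\alpha_3}$ on the first off-diagonals and $\alpha_2$ on the second off-diagonals, with the single modification that $\alpha_2$ is subtracted from the two diagonal corners. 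Conjugation by $D$ leaves the eigenvalues unchanged and maps eigenvectors by $\bfx_j=D\tilde\bfx_j$, so it suffices to find the eigenpairs of $\tilde G$.

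For $\tilde G$ I would substitute the ansatz $\tilde x_{j,i}=C\sin(j\pi i h)$, $h=1/(n+1)$, into $\tilde G\tilde\bfx_j=\lambda\tilde\bfx_j$. Writing $\sigma_i=\sin(j\pi i h)$ and using $\sigma_{i-l}+\sigma_{i+l}=2\cos(l j\pi h)\sigma_i$ for $l=1,2$, every interior row collapses to the row-independent scalar relation $\lambda=\alpha_0+2\sqrt{\alpha_1\alpha_3}\cos(j\pi h)+2\alpha_2\cos(2j\pi h)$. For the rows next to a corner one uses $\sigma_0=\sigma_{n+1}=0$ to see that the band entries truncated by the matrix edge are multiplied by zero anyway; for the corner rows themselves one uses the reflection identities $\sigma_{-1}=-\sigma_1$ and $\sigma_{n+2}=-\sigma_n$ and observes that the $-\alpha_2$ shift of the corner diagonal entries is exactly what cancels the reflected ghost contributions $\alpha_2\sigma_{-1}$ and $\alpha_2\sigma_{n+2}$. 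Once $\tilde G\tilde\bfx_j=\lambda_j\tilde\bfx_j$ is verified, transforming back by $D$ restores the parity-dependent prefactor $\sqrt{\alpha_3/\alpha_1}$ on the even components, giving \eqref{eq:set5}. Finally, the $n$ discrete-sine vectors $(\sigma_i)_{i=1}^n$, $j=1,\dots,n$, are linearly independent and $D$ is invertible, so $\{\bfx_j\}_{j=1}^n$ spans $\mathbb{C}^n$ and these are all the eigenpairs.

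I expect the boundary analysis to be the only delicate part: one must check, separately for the odd-$n$ and even-$n$ layouts in \eqref{eq:g3}, that among rows $1,2,n-1,n$ exactly the two corner diagonal entries need the $-\alpha_2$ shift and no other row needs an adjustment, which is precisely the statement that the trigonometric boundary data is ``Dirichlet at $0$ and $n+1$ with odd extension''. A second, minor point is the bookkeeping for the square roots when $\alpha_1\alpha_3\le 0$ or $\alpha_1=0$: when $\alpha_1\alpha_3<0$ the matrix $D$ above is complex, but since both $\{\cos(j\pi h)\}_{j=1}^{n}$ (via $j\mapsto n+1-j$) and each $\cos(2j\pi h)$ are invariant under a sign change of the square root, the eigenvalue list in \eqref{eq:set5} is unaffected, and the absolute values in \eqref{eq:set5} are there to absorb that sign; the case $\alpha_1=0$ (or $\alpha_3=0$) is degenerate and should be treated separately or excluded. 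Alternatively, the whole argument can be carried out by substituting \eqref{eq:set5} directly into $A\bfx_j=\lambda_j\bfx_j$ without ever forming $\tilde G$, in which case the role of the prefactor $\sqrt{|\alpha_3/\alpha_1|}$ is to convert the coefficient $\alpha_3$ occurring in the even rows into $\sqrt{\alpha_1\alpha_3}$, so that the even rows reduce to the same scalar identity as the odd rows.
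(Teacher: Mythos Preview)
Your proposal is correct and your primary route---diagonal similarity $D=\mathrm{diag}(1,\sqrt{\alpha_3/\alpha_1},1,\sqrt{\alpha_3/\alpha_1},\dots)$ reducing $G^{(\alpha)}$ to a symmetric pentadiagonal Toeplitz matrix with $-\alpha_2$ corner shifts, followed by the standard sine ansatz---is a genuinely different framing from the paper's. The paper instead posits directly a parity-dependent ansatz $x_{j,k}=a\sin(j\pi kh)$ for $k$ odd and $x_{j,k}=b\sin(j\pi kh)$ for $k$ even with $a,b$ unknown, reduces the odd rows to $\lambda=\alpha_0+2(b/a)\alpha_1\cos(j\pi h)+2\alpha_2\cos(2j\pi h)$ and the even rows to $\lambda=\alpha_0+2(a/b)\alpha_3\cos(j\pi h)+2\alpha_2\cos(2j\pi h)$, and then solves $b\alpha_1/a=a\alpha_3/b$ for $b/a$. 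Your similarity argument is essentially the same computation viewed structurally: it explains \emph{why} the ratio $\sqrt{\alpha_3/\alpha_1}$ appears (it is the unique parity scaling that symmetrises the first off-diagonals while leaving the second off-diagonals untouched), and it lets you invoke the known symmetric result rather than redo the row-by-row reduction. You also sketch the direct-substitution alternative at the end, which is precisely the paper's argument. Your discussion of the $\alpha_1\alpha_3\le 0$ cases is more careful than the paper's, which simply writes absolute values without comment; note, however, that when $\alpha_1\alpha_3<0$ the eigenvalues are genuinely complex (your $D$ is complex and $\tilde G$ has purely imaginary first off-diagonals), so the absolute value in the stated $\lambda_j$ cannot literally be correct in that regime---but that is a defect of the theorem statement, not of your proof.
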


\begin{proof}
When $\alpha_3=\alpha_1$, the exact eigenvectors given in \cite{deng2021analytical} are of the form $C\sin{j\pi kh}$. Herein, the row pattern alternates between odd and even rows, suggesting a strategy to seek for eigenvectors in the form 
\begin{equation}
x_{j,k} = 
\begin{cases}
a \sin(j\pi kh), \qquad \text{when $k$ is odd}, \\
b \sin(j\pi kh), \qquad \text{when $k$ is even}, \\
\end{cases}
\end{equation}
where $a,b\ne 0$ are to be determined.
With this form in mind, we first consider the case when $n$ is odd.
The odd rows of the MEVP \eqref{eq:evp} can be rewritten as
\begin{equation}
\begin{aligned}
\lambda a \sin(j\pi h) & = (\alpha_0 - \alpha_2) a \sin(j\pi h) + \alpha_1 b \sin(2j\pi h) +\alpha_2 a \sin(3j\pi h),  \\ 
\lambda a \sin(j\pi k h) & = \alpha_0 a \sin(j\pi k h) + \alpha_1 b \big( \sin(j\pi (k-1) h) + \sin(j\pi (k+1) h) \big) \\ 
& + \alpha_2 a \big( \sin(j\pi (k-2) h) + \sin(j\pi (k+2) h) \big),  k = 3, 5, \cdots, n-2, \\ 
\lambda a \sin(j\pi n h) & = (\alpha_0 - \alpha_2) a \sin(j\pi n h) + \alpha_1 b \sin(j\pi (n-1) h) +\alpha_2 a \sin(j\pi (n-2) h),  \\ 
\end{aligned}
\end{equation}
which, by using trigonometric identities (namely, double-angle formula, triple-angle formula, sum and difference formulas, and periodicity identities), are reduced to
\begin{equation} \label{eq:set5eigodd}
\lambda =  \alpha_0 + 2 b \alpha_1 \cos(j\pi h) /a +  2 \alpha_2 \cos(2 j\pi h), \quad k=1,3, \cdots, n.
\end{equation}

Similarly, the even rows of the MEVP \eqref{eq:evp} can be rewritten as
\begin{equation}
\begin{aligned}
\lambda b \sin(2j\pi h) & = \alpha_0 b \sin(2j\pi h) + \alpha_3 a \big( \sin(j\pi h) + \sin(3j\pi h) \big)+\alpha_2 b \sin(4j\pi h),  \\ 
\lambda b \sin(j\pi k h) & = \alpha_0 b \sin(j\pi k h) + \alpha_3 a \big( \sin(j\pi (k-1) h) + \sin(j\pi (k+1) h) \big) \\ 
& + \alpha_2 b \big( \sin(j\pi (k-2) h) + \sin(j\pi (k+2) h) \big),  k = 2, 4, \cdots, n-3, \\ 
\lambda b \sin(j\pi (n-1) h) & = \alpha_0 b \sin(j\pi (n-1) h) + \alpha_3 a (\sin(j\pi (n-2) h) + \sin(j\pi n h) ) \\
& + \alpha_2 a \sin(j\pi (n-3) h),  
\end{aligned}
\end{equation}
which reduces to
\begin{equation}\label{eq:set5eigeven}
\lambda =  \alpha_0 + 2 a \alpha_3 \cos(j\pi h) /b +  2 \alpha_2 \cos(2 j\pi h), \quad k=2,4, \cdots, n-1.
\end{equation}

We now set 
$
b \alpha_1/a = a \alpha_3 /b
$
and solve for $b$ to obtain
$b/a = \sqrt{| \alpha_3/\alpha_1 |} $ and $a/b = \sqrt{| \alpha_1/\alpha_3 |}$.
Thus, equations \ref{eq:set5eigodd} and \ref{eq:set5eigeven} reduces to
\begin{equation}\label{eq:set5eig}
\lambda =   \alpha_0 + 2 \sqrt{| \alpha_1 \alpha_3| } \cos(j\pi h)  +  2 \alpha_2 \cos(2 j\pi h),
%\alpha_0 + 2 \sum_{l=1}^{\lfloor (m+1)/2 \rfloor}  \sqrt{| \alpha_{2l-1} \hat{\alpha}_{2l-1}| } \cos((2l - 1) j\pi h)  +  2 \sum_{l=1}^{\lfloor m/2 \rfloor} \alpha_{2l} \cos(2l j\pi h) .
\end{equation}
This completes the proof for the case when $n$ is odd and the case of even number $n$ follows similarly. 
\end{proof}

\begin{remark}
Theorem \ref{thm:set5} gives the analytical solutions to MEVP rather than GMEVP. Analytical eigenpairs can be derived for GMEVP when the right-hand side matrix is symmetric and persymmetric so that the denominators in the corresponding representations \ref{eq:set5eigodd} and \ref{eq:set5eigeven} are the same. We omit this derivation here for simplicity. 
Another remark is that other boundary patterns could be considered here. For example, one can consider the pattern in Theorem \ref{thm:set1} and derive the exact eigenpairs where the eigenvectors are of the form $C \sin\left( j \pi (n-k+1/2) h \right)$ for odd entries and $C\sqrt{|\alpha_3/\alpha_1|} \sin( j \pi (k-1/2) h)$ for even entries. 
\end{remark}

\subsection{Generalisation of the pentadiagonal matrix}

We consider two lines of generalisation of the result in Section \ref{sec:b5}. 
First, we consider 
\begin{equation}  \label{eq:g4}
\begin{aligned}
G^{(\alpha,m)} & = 
\begin{bmatrix}
\alpha_0 - \alpha_2   & \alpha_1-\alpha_3   & \cdots   & \alpha_{m-2} - \alpha_m & \alpha_{m-1} & \alpha_m \\
%\hat{\alpha}_1 -  \hat{\alpha}_3   & \alpha_0 - \alpha_4 & \hat{\alpha}_1 -  \hat{\alpha}_5  & \cdots  & \alpha_m   \\
\vdots  & \vdots  & \vdots & \vdots  & \vdots   & \vdots  & \ddots \\
%\vdots  & \alpha_1  & \alpha_0 & \alpha_1  & \cdots   & \alpha_m   \\
%
%
  \cdots  & \alpha_1  & \alpha_0 & \alpha_1  & \alpha_2 & \alpha_3 & \cdots   & \alpha_m  &  \\
&   \cdots &  \hat{\alpha}_1  & \alpha_0 & \hat{\alpha}_1  & \alpha_2 & \hat{\alpha}_3 & \cdots   & \alpha_m  & \\
& &\ddots & \ddots & \ddots & \ddots & \ddots & \ddots & \ddots  & \vdots  \\
\end{bmatrix}_{n\times n},
\end{aligned}
\end{equation}
where internal rows are alternating similarly as for the pentadiagonal matrix in Section \ref{sec:b5}, and the boundary entries are set accordingly so that the reduced eigenvalue forms are consistent in each row. 
We have the following result and the proof is omitted as it is trivial following the proof of Theorem \ref{thm:set5}.

\begin{corollary}[Exact eigenvalues and eigenvectors, set 6]\label{thm:set6}
Let $n\ge4, n-2\ge m\ge2$ and
$
A = G^{(\alpha, m)}.
$ 
Assuming $\frac{\hat{\alpha}_1}{\alpha_1} = \frac{\hat{\alpha}_3}{\alpha_3} = \cdots = \frac{\hat{\alpha}_{2 \lfloor (m-1)/2 \rfloor +1} }{\alpha_{2 \lfloor (m-1)/2 \rfloor +1}}$.
The MEVP \eqref{eq:evp} has eigenpairs $(\lambda_j, \bfx_j)$ where
\begin{equation}\label{eq:set6}
\begin{aligned}
\lambda_j & = \lambda =  \alpha_0 + 2 \sum_{l=1}^{\lfloor (m+1)/2 \rfloor}  \sqrt{| \alpha_{2l-1} \hat{\alpha}_{2l-1}| } \cos((2l - 1) j\pi h)  +  2 \sum_{l=1}^{\lfloor m/2 \rfloor} \alpha_{2l} \cos(2l j\pi h),  \\
 \quad x_{j,2k-1} & = C \sin( j \pi k h),  \\
  \quad x_{j,2k} & = C\sqrt{|\hat{\alpha}_1/\alpha_1|} \sin( j \pi k h),  \\
\end{aligned}
\end{equation}
where $ j=1,2, \cdots, n, k =1,2,\cdots, \lceil n/2 \rceil, h = \frac{1}{n+1}, C\ne 0$ is some normalisation constant.
\end{corollary}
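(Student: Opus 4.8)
The plan is to run the argument of the proof of Theorem~\ref{thm:set5} with a band of width $m$ in place of a pentadiagonal band, the only new feature being that the odd-offset couplings now occur in two versions ($\alpha_l$ in the odd rows, $\hat\alpha_l$ in the even rows). As in that proof I look for eigenvectors with two alternating sinusoidal amplitudes,
\begin{equation*}
x_{j,k}=\begin{cases}a\,\sin(j\pi k h), & k\ \text{odd},\\ b\,\sin(j\pi k h), & k\ \text{even},\end{cases}\qquad h=\frac1{n+1},
\end{equation*}
with $a,b\neq0$ to be chosen, and I extend this formula to ``ghost'' indices $k\le 0$ and $k\ge n+1$. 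The choice $h=1/(n+1)$ is exactly what makes the boundary rows behave: $\sin(0)=0$, $\sin(j\pi(n+1)h)=\sin(j\pi)=0$, and for every integer $t$ one has $\sin(j\pi(-t)h)=-\sin(j\pi t h)$ and $\sin(j\pi(n+1+t)h)=-\sin(j\pi(n+1-t)h)$, each with a sign that does not depend on $j$, while the reflected index keeps the parity of the original one, hence the same amplitude label.

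For an interior row index $i$ with $m<i\le n-m$ the band $[i-m,i+m]$ lies inside $\{1,\dots,n\}$, and the identity $\sin(j\pi(i-l)h)+\sin(j\pi(i+l)h)=2\cos(lj\pi h)\sin(j\pi i h)$ collapses the $i$-th equation of \eqref{eq:evp} to a single scalar relation. For $i$ odd (entry $\alpha_l$ at offsets $\pm l$; an odd offset then meets an even column and an even offset an odd column) this reads
\begin{equation*}
\lambda=\alpha_0+2\frac ba\sum_{l\ \text{odd},\,l\le m}\alpha_l\cos(lj\pi h)+2\sum_{l\ \text{even},\,l\le m}\alpha_l\cos(lj\pi h),
\end{equation*}
and for $i$ even ($\hat\alpha_l$ at odd offsets, $\alpha_l$ at even offsets) it reads the same with $b/a$ replaced by $a/b$ and $\alpha_l$ replaced by $\hat\alpha_l$ in the odd sum. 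For these two to agree for all $j$ one needs $(b/a)\alpha_l=(a/b)\hat\alpha_l$ for every odd $l\le m$, i.e. exactly the common-ratio hypothesis of the statement, after which $b/a=\sqrt{|\hat\alpha_1/\alpha_1|}$; inserting this value turns both relations into the eigenvalue of \eqref{eq:set6} once the odd and even sums are reindexed by $l=2l'-1$ and $l=2l'$.

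The substantive new step is the boundary rows $i\le m$ and $i>n-m$, where the band reaches ghost indices. Rewriting each ghost contribution through the reflection identities above folds it onto a genuine column $k'\in\{1,\dots,n\}$ of the same parity, with a uniform sign flip, and the ghost indices $0$ and $n+1$ drop out because $\sin$ vanishes there; because $m\le n-2$ a short index count shows the left and right folds never target the same column, so the resulting effective entries are well defined. One then has to verify that the displayed boundary entries of $G^{(\alpha,m)}$ in \eqref{eq:g4} are precisely the interior pattern corrected by these folded terms — for the first row this is the displayed $\alpha_0-\alpha_2,\ \alpha_1-\alpha_3,\ \dots,\ \alpha_{m-2}-\alpha_m,\ \alpha_{m-1},\ \alpha_m$, and rows $2,\dots,m$ together with the last $m$ rows are handled the same way with more cancelling terms; the cases of $n$ even and $n$ odd differ only in whether the last row is odd or even. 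Granting this, every component of $G^{(\alpha,m)}\bfx_j-\lambda_j\bfx_j$ vanishes (trivially so where $\sin(j\pi i h)=0$), so $(\lambda_j,\bfx_j)$ solves \eqref{eq:evp}. This telescoping identification of the boundary entries is where essentially all the work sits; it is routine but must be carried out carefully for each boundary band, and it is the main obstacle.

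Finally, stripping the $j$-independent rescaling $\mathrm{diag}(a,b,a,b,\dots)$ from $\bfx_j$ leaves the standard discrete-sine vector $(\sin(j\pi k h))_{k=1}^{n}$; for $h=1/(n+1)$ these are linearly independent over $j=1,\dots,n$, and since the rescaling is invertible, the $\bfx_j$ form a basis, so the list in \eqref{eq:set6} is the full spectrum. Everything beyond the boundary bookkeeping is the computation of Theorem~\ref{thm:set5} repeated with $m$ off-diagonals, which is why the result is stated as an immediate corollary.
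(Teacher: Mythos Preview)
Your proposal is correct and follows exactly the route the paper intends: the paper omits the proof entirely, stating only that it ``is trivial following the proof of Theorem~\ref{thm:set5},'' and your argument is precisely that proof re-run with bandwidth $m$, the alternating ansatz $x_{j,k}=a\sin(j\pi kh)$ or $b\sin(j\pi kh)$, and the common-ratio hypothesis used to reconcile the odd- and even-row reductions. Your ghost-index/reflection bookkeeping for the boundary rows is a slightly more systematic phrasing of the explicit first/last-row checks in the proof of Theorem~\ref{thm:set5}, but it is the same mechanism, and your verification that the displayed first-row entries $\alpha_0-\alpha_2,\ \alpha_1-\alpha_3,\dots$ are exactly the folded corrections matches the construction of $G^{(\alpha,m)}$ in \eqref{eq:g4}.
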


\begin{remark}
Another way of generalisation is to consider eigenvectors of a more general form, for example, in the form
\begin{equation}
x_{j,k} = 
\begin{cases}
a \sin(j\pi kh), \qquad \text{if $\mod(k,3) =0$}, \\
b \sin(j\pi kh), \qquad \text{if $\mod(k,3) =1$}, \\
c \sin(j\pi kh), \qquad \text{if $\mod(k,3) =2$}, \\
\end{cases}
\end{equation}
for three-row patterns, or
$$
a_j \sin(j\pi kh), \qquad \text{if $\mod(k,p) =0$}, 
$$ 
for $p$-row matrix patterns.
These generalisations correspond to matrices of certain structures, addressing an inverse-type problem within the field of matrix eigenvalue analysis. In this context, the task involves generating matrices based on predefined forms of eigenvectors. Specifically, one may start with an assumption about the desired form of the eigenvectors and then work to determine the matrices that would naturally exhibit these vectors as their eigenvectors.

Given the vast potential configurations, there are likely infinitely many matrices that could satisfy the conditions for the assumed eigenvectors. However, to achieve uniqueness in the solutions and refine the scope of our inquiry, it may be necessary to impose additional structural constraints on the matrices. Currently, the consideration of such constraints to ensure unique matrix determination remains an open area for further research. This aspect of our study lays the groundwork for future explorations aimed at defining more precise conditions under which these matrices can be uniquely constructed from specified eigenvector forms.
\end{remark}

\section{Concluding remarks} \label{sec:conclusion} 
This paper has expanded the domain of exact analytical solutions for matrix eigenvalue problems by addressing the complexities of non-symmetric and non-persymmetric matrices. Such matrices are prevalent in one-dimensional Laplacian eigenvalue problems with mixed boundary conditions and several applications in quantum mechanics where traditional symmetric or persymmetric matrix assumptions do not suffice. Through rigorous mathematical development, we have derived exact eigenpairs for multiple sets of structured matrices, significantly broadening the applicability and relevance of these analytical methods in physics and engineering simulations.

Our results not only provide a robust foundation for future research in the field but also enhance computational methodologies by offering exact solutions to matrix eigenvalue problems. The implications of this work are substantial, offering new perspectives and tools for scientists and engineers working with complex matrix configurations in various applications. As matrix problems continue to evolve with new scientific challenges, the methodologies developed herein will serve as a resource for addressing these advanced problems.

\section*{Acknowledgments} 
The author thanks Dr. Anas Abdelwahab for questions and discussions on several special matrices arisen in Quantum Physics, as well as for the suggestion to document these new results in a manuscript. 
%Dr. Abdelwahab's insights were crucial in adapting our analytical methods to encompass a wider range of matrices relevant to quantum physics applications, thereby extending the research's relevance to a broader scientific community.
%Dr. Abdelwahab's perspectives were instrumental in adapting our analytical methods to a broader set of matrices relevant to quantum physics applications, making the research applicable to a wider scientific audience. 

%The author thanks Anas Abdelwahab for the insightful and motivating discussions on generalising the results in \cite{deng2021analytical} to various matrices arising in Quantum Physics. With his suggestion for a more broad readers, this manuscript is prepared. 

%\section*{References}

%\bibliographystyle{plain}
\bibliographystyle{siam}
%\bibliography{ref.bib}
%\bibliographystyle{siamplain}
%\bibliographystyle{siam}
%\bibliography{ref}

%We present other 

%\appendix{\textbf{Matrices}}
%

%\bibliographystyle{siamplain}
\bibliography{ref}
\end{document}